\newcommand\Vbullet{\raisebox{-2.1pt}{\kern-0.4pt\vbox{\baselineskip4pt\lineskiplimit0pt%
\hbox{$\bullet$}\hbox{$\bullet$}}}}
\title{The hypermultiple gamma functions of BM-type}
\author{Hanamichi Kawamura}
\begin{document}
\maketitle
\newtheorem{Thm}{Theorem}[section]
\newtheorem{Def}[Thm]{Definition}
\newtheorem{Lem}[Thm]{Lemma}
\newtheorem{Prop}[Thm]{Proposition}
\newtheorem{Cor}[Thm]{Corollary}
\newtheorem{Ex}[Thm]{Example}
\newcommand{\bs}[1]{\boldsymbol{#1}}
\newcommand{\env}[1]{\begin{eqnarray*}\displaystyle#1\end{eqnarray*}}
\newcommand{\nmb}[1]{\begin{eqnarray}\displaystyle#1\end{eqnarray}}
\newcommand{\kak}[1]{\left(#1\right)}
\begin{abstract}
In this paper, we introduce the hypermultiple gamma functions of BM-type and prove the asymptotic expansion of these functions.
\end{abstract}
\section{Introduction}
In Barnes \cite{barnes}, he introduced the multiple gamma function
\env{\Gamma_r(w;\bs{\omega})=\exp\kak{\left.\frac{\partial}{\partial s}\zeta_r(s,w;\bs{\omega})\right|_{s=0}},}
where $\zeta_r$ means the Barnes multiple zeta function
\begin{eqnarray}\label{zetadef}\displaystyle \zeta_r(s,w;\bs{\omega})=\sum_{\bs{n}\geq\bs{0}} (\bs{n}\cdot\bs{\omega}+w)^{-s},\end{eqnarray}
$\mathrm{Re}(w),\mathrm{Re}(\omega_i)>0$ $(i=1,\cdots,r)$, $\bs{\omega}=(\omega_1,\cdots,\omega_r)$, $\bs{n}=(n_1,\cdots,n_r)\in\mathbb{Z}^r$, $\bs{n}\cdot\bs{\omega}=n_1\omega_1+\cdots+n\omega_r$ and $\bs{n}\geq\bs{0}$ means $n_i\geq{0}$ $(i=1,\cdots,r)$. He also proved the asymptotic expansion of this function:
\begin{Thm}
	As $w\to\infty$, we have asymptotically
	\env{\log\Gamma_r(w+a;\bs{\omega})=\sum_{N=0}^r a_{r,-N}(a;\bs{\omega})\frac{(-w)^{N}}{N!}(H_{N}-\log w)+a_{r,k+1}(a;\bs{\omega})\frac{1}{w}+O\left(\frac{1}{w^2}\right),}
	where $\mathrm{Re}(a)>0$, $|\bs{\omega}|=\omega_1\cdots\omega_r$, $H_N$ means $N$-th harmonic number
	\env{H_N=\sum_{i=1}^N i^{-1}}
	and $a_{r,N}$ means the $N$-th multiple Bernoulli polynomial
	\env{\frac{e^{-wt}}{\prod_{i=1}^r (1-e^{-\omega_it})}=\sum_{N=-r}^{\infty} a_{r,N}(w;\bs{\omega})t^N.}
	\end{Thm}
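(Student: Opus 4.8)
\emph{Proof proposal.}
The plan is to start from the Mellin--Barnes integral representation of $\zeta_r$, isolate the large-$w$ behaviour by a Watson-type argument, and then differentiate in $s$ at $s=0$. Inserting $(\bs n\cdot\bs\omega+w)^{-s}=\Gamma(s)^{-1}\int_0^\infty t^{s-1}e^{-(\bs n\cdot\bs\omega+w)t}\,dt$ into \eqref{zetadef} and summing the geometric series in each $n_i$ gives, for $\mathrm{Re}\,s>r$,
\[\Gamma(s)\,\zeta_r(s,w;\bs\omega)=\int_0^\infty t^{s-1}\,\frac{e^{-wt}}{\prod_{i=1}^r\kak{1-e^{-\omega_it}}}\,dt,\]
which then continues meromorphically in $s$. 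Replacing $w$ by $w+a$ and factoring $e^{-(w+a)t}=e^{-wt}e^{-at}$, the integrand becomes $t^{s-1}e^{-wt}F_a(t)$ with $F_a(t)=e^{-at}\big/\prod_i(1-e^{-\omega_it})$; by the definition of the multiple Bernoulli polynomials, $F_a(t)=\sum_{N\ge-r}a_{r,N}(a;\bs\omega)\,t^N$ on a punctured neighbourhood of $t=0$.

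Next I would write $F_a(t)=\sum_{N=-r}^{M}a_{r,N}(a;\bs\omega)t^N+G_M(t)$ with $G_M(t)=O(t^{M+1})$ as $t\to0$, and integrate termwise against $t^{s-1}e^{-wt}$, using $\int_0^\infty t^{s+N-1}e^{-wt}\,dt=\Gamma(s+N)w^{-s-N}$:
\[\Gamma(s)\,\zeta_r(s,w+a;\bs\omega)=\sum_{N=-r}^{M}a_{r,N}(a;\bs\omega)\,\Gamma(s+N)\,w^{-s-N}+R_M(s,w),\]
where $R_M(s,w)=\int_0^\infty t^{s-1}e^{-wt}G_M(t)\,dt$ is holomorphic for $\mathrm{Re}\,s>-M-1$. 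This holds for $\mathrm{Re}\,s>r$, hence, both sides being meromorphic, throughout $\mathrm{Re}\,s>-M-1$; the simple poles at $s=0,1,\dots,r$ match on the two sides. Splitting $R_M(0,w)$ at $t=1$ yields $R_M(0,w)=O(w^{-M-1})$ as $w\to\infty$ (in any fixed sector $|\arg w|\le\pi-\varepsilon$, rotating the ray of integration if necessary), so $R_M$ only feeds the error term.

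It then remains to divide by $\Gamma(s)$ and apply $\partial_s$ at $s=0$. Since $1/\Gamma(s)=s+\gamma s^2+O(s^3)$, if the right-hand side above equals $A(w)/s+B(w)+O(s)$ near $s=0$, then $\zeta_r(0,w+a;\bs\omega)=A(w)$ and, by the defining formula for $\Gamma_r$, $\log\Gamma_r(w+a;\bs\omega)=\left.\partial_s\zeta_r(s,w+a;\bs\omega)\right|_{s=0}=B(w)+\gamma A(w)$. One reads off $A,B$ from the Laurent expansions at $s=0$: for $N\ge1$ the summand $a_{r,N}(a;\bs\omega)\Gamma(s+N)w^{-s-N}$ is holomorphic and contributes $a_{r,N}(a;\bs\omega)(N-1)!\,w^{-N}$ to $B(w)$; for $N=-j$ with $0\le j\le r$, using $\Gamma(s-j)=\frac{(-1)^j}{j!}\kak{\frac1s+H_j-\gamma+O(s)}$ and $w^{-s+j}=w^j\kak{1-s\log w+O(s^2)}$ puts $a_{r,-j}(a;\bs\omega)\frac{(-w)^j}{j!}$ into $A(w)$ and $a_{r,-j}(a;\bs\omega)\frac{(-w)^j}{j!}\kak{H_j-\gamma-\log w}$ into $B(w)$. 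Upon adding $\gamma A(w)$ the Euler--Mascheroni terms cancel, giving
\[\log\Gamma_r(w+a;\bs\omega)=\sum_{j=0}^{r}a_{r,-j}(a;\bs\omega)\frac{(-w)^j}{j!}\kak{H_j-\log w}+\sum_{N=1}^{M}a_{r,N}(a;\bs\omega)(N-1)!\,w^{-N}+O\kak{w^{-M-1}},\]
and the case $M=1$ is precisely the assertion.

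The one genuinely delicate point is the uniformity behind the Watson estimate---controlling $R_M(s,w)$ for $s$ in a fixed neighbourhood of $0$ uniformly in large $w$, which is what legitimises the termwise integration and the interchange with $\partial_s$; everything else is bookkeeping of Laurent coefficients, with the agreeable feature that all the $\gamma$'s drop out of the final answer. Alternatively one could induct on $r$ using the difference equation of Barnes expressing $\Gamma_r$ through $\Gamma_{r-1}$, but summing that relation reintroduces essentially the same analysis.
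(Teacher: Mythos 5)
Your argument is correct, and it reaches the stated expansion by a route different from the one the paper leans on. The paper does not prove this theorem itself: it quotes Barnes and points to the Katayama--Ohtsuki proof, which starts from the Hankel-contour representation $\log\Gamma_r(w;\bs{\omega})=\int_{I(\lambda,\infty)}f_{\bs{\omega}}(t)e^{-wt}t^{-1}\kak{\frac{1}{2\pi i}\log t+\frac{\gamma}{2\pi i}-\frac{1}{2}}dt$, expands $f_{\bs{\omega}}(t)e^{-at}$ into the multiple Bernoulli series, and estimates the tail directly on the contour --- the same scheme the paper uses for its own Theorems 1.3 and 1.4. You instead work at the level of $\Gamma(s)\zeta_r(s,w+a;\bs{\omega})$ on the real half-line, continue meromorphically, and read off the Laurent data at $s=0$. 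The contour method buys a kernel in which the $s$-differentiation at $0$ has already been performed (so no analytic continuation in $s$ and no Euler constants to cancel at the end); your method is self-contained from the series definition and makes visible exactly where the $H_N-\log w$ factors come from, namely the double pole of $\Gamma(s-N)w^{-s+N}$ against the simple zero of $1/\Gamma(s)$. Two small remarks. First, the "genuinely delicate point" you flag is milder than you fear: since $1/\Gamma(s)=s(1+\gamma s+O(s^2))$, the remainder enters $\partial_s\zeta_r|_{s=0}$ only through its value $R_M(0,w)$, not its $s$-derivative, so a single application of Watson's lemma at $s=0$ suffices and no interchange of $\partial_s$ with the integral is needed. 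Second, your conclusion gives the $1/w$ coefficient as $a_{r,1}(a;\bs{\omega})$, which is what the statement should say --- the printed $a_{r,k+1}$ is a typo (no $k$ is in scope in Theorem 1.1), so you have in effect corrected the statement rather than deviated from it.
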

Katayama-Ohtsuki \cite{katayama1998} gave a simple proof of this behavior based on the integral representation of $\log \Gamma_r$;
\env{\log\Gamma_r(w;\bs{\omega})=\int_{I(\lambda,\infty)} f_{\bs{\omega}}(t)e^{-wt}t^{-1}\kak{\frac{1}{2\pi i}\log t+\kak{\frac{\gamma}{2\pi i}-\frac{1}{2}}}\,dt,}
where $f_{\bs{\omega}}(t)=\prod_{i=1}^r (1-e^{-\omega_it})^{-1}$, $0<\lambda<\mathrm{min}_{1\leq{i}\leq{r}} |2\pi/\omega_i|$ and $I(\lambda, \infty)$ is the path consisting of the infinite line from $\infty$ to $\lambda$, the circle of radius $\lambda$ around $0$ in the positive sense and the infinite line from $\lambda$ to $\infty$.\\

While, Kurokawa-Ochiai \cite{ki} introduced the multiple gamma functions of BM (Barnes-Milnor) type:
\env{\Gamma_{r,k}(w;\bs{\omega})=\exp\kak{\left.\frac{\partial}{\partial s}\zeta_r(s,w;\bs{\omega})\right|_{s=-k}}}
for non-negative integers $k$. They also generalized Kinkelin's formulas:
\begin{Thm}
For $k\geq{1}$, we have
\env{\int_0^w \kak{\log\Gamma_{r,k-1}(t;\bs{\omega})-\frac{1}{k}\zeta_r(1-k,t;\bs{\omega})}\,dt=\frac{1}{k}\log\frac{\Gamma_{r,k}(w;\bs{\omega})}{\Gamma_{r,k}(0;\bs{\omega})}.}
\end{Thm}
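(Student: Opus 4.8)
The plan is to differentiate the asserted identity in $w$ and reduce it to the fundamental theorem of calculus. Throughout write $\zeta_r'(s,w;\bs{\omega})=\frac{\partial}{\partial s}\zeta_r(s,w;\bs{\omega})$, so that by definition $\log\Gamma_{r,k}(w;\bs{\omega})=\zeta_r'(-k,w;\bs{\omega})$ and $\log\Gamma_{r,k-1}(w;\bs{\omega})=\zeta_r'(1-k,w;\bs{\omega})$. First I would record the shift relation
\[\frac{\partial}{\partial w}\zeta_r(s,w;\bs{\omega})=-s\,\zeta_r(s+1,w;\bs{\omega}),\]
which holds by term-by-term differentiation of \eqref{zetadef} when $\mathrm{Re}(s)$ is large, and then for all $s$ by analytic continuation, both sides being meromorphic in $s$ with poles only at $s=1,\dots,r$. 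In particular $s=-k$ and $s=1-k$ are regular points, and in a neighborhood of them $\frac{\partial}{\partial s}$ and $\frac{\partial}{\partial w}$ commute (by joint holomorphy of $\zeta_r$ in $(s,w)$, which also follows from its standard integral representation).

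Next I would apply $\frac{\partial}{\partial s}$ to the shift relation. Since $\frac{\partial}{\partial s}\bigl(-s\,\zeta_r(s+1,w;\bs{\omega})\bigr)=-\zeta_r(s+1,w;\bs{\omega})-s\,\zeta_r'(s+1,w;\bs{\omega})$, evaluation at $s=-k$ gives
\[\frac{\partial}{\partial w}\zeta_r'(-k,w;\bs{\omega})=k\,\zeta_r'(1-k,w;\bs{\omega})-\zeta_r(1-k,w;\bs{\omega}).\]
Dividing by $k$ and translating back through the two identifications above, this is precisely
\[\frac{\partial}{\partial w}\kak{\frac{1}{k}\log\Gamma_{r,k}(w;\bs{\omega})}=\log\Gamma_{r,k-1}(w;\bs{\omega})-\frac{1}{k}\zeta_r(1-k,w;\bs{\omega}),\]
so the integrand in the claimed formula is the $w$-derivative of $\frac{1}{k}\log\Gamma_{r,k}(w;\bs{\omega})$, and integrating from $0$ to $w$ will produce the right-hand side.

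The one point needing care is the behaviour at the endpoint $w=0$. The $\bs{n}=\bs{0}$ term of \eqref{zetadef} is $w^{-s}$, contributing the polynomial $w^{k}$ to $\zeta_r(-k,w;\bs{\omega})$ and $w^{k-1}$ to $\zeta_r(1-k,w;\bs{\omega})$, and contributing $-w^{k}\log w$ to $\zeta_r'(-k,w;\bs{\omega})=\log\Gamma_{r,k}(w;\bs{\omega})$, while the remaining terms are holomorphic in $w$ near $0$ since $\mathrm{Re}(\bs{n}\cdot\bs{\omega})>0$ for $\bs{n}\neq\bs{0}$. Hence $\zeta_r(1-k,w;\bs{\omega})$ is smooth at $w=0$ and $\log\Gamma_{r,k}(w;\bs{\omega})$ extends continuously there with $\log\Gamma_{r,k}(0;\bs{\omega})$ finite (for $k\ge 1$), its $w$-derivative having at worst an integrable $\log$-type singularity at $0$, and only when $k=1$, coming from the $\log\Gamma_{r,k-1}=\log\Gamma_{r,0}$ term. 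The fundamental theorem of calculus then applies and yields
\[\int_0^w\kak{\log\Gamma_{r,k-1}(t;\bs{\omega})-\frac{1}{k}\zeta_r(1-k,t;\bs{\omega})}\,dt=\frac{1}{k}\log\Gamma_{r,k}(w;\bs{\omega})-\frac{1}{k}\log\Gamma_{r,k}(0;\bs{\omega}),\]
which is the assertion. I expect the main obstacle to be exactly this regularity bookkeeping — justifying the term-by-term differentiation and continuation of the shift relation, the interchange of $\partial_s$ and $\partial_w$, and the endpoint behaviour at $w=0$; once these are in place the identity falls out of the product rule in a single line.
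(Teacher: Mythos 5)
Your proof is correct: differentiating the shift relation $\frac{\partial}{\partial w}\zeta_r(s,w;\bs{\omega})=-s\zeta_r(s+1,w;\bs{\omega})$ in $s$ and evaluating at $s=-k$ yields exactly the differential form $\frac{\partial}{\partial w}\log\Gamma_{r,k}(w;\bs{\omega})=k\log\Gamma_{r,k-1}(w;\bs{\omega})-\zeta_r(1-k,w;\bs{\omega})$ that the paper itself records immediately after the statement, and your endpoint analysis at $w=0$ is sound. The paper gives no proof of its own (the theorem is quoted from Kurokawa--Ochiai), so your argument supplies the details along precisely the route the paper indicates.
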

This theorem can be written as
\env{\frac{\partial}{\partial w}\log\Gamma_{r,k}(w;\bs{\omega})=k\log\Gamma_{r,k-1}(w;\bs{\omega})-\zeta_r(1-k,w;\bs{\omega}).}
The term $\zeta_r(1-k,w;\bs{\omega})$ can be regarded as a ``gap" of hierarchy of $\Gamma_{r,k}$. To correct this gap, the ``balanced" multiple gamma funtions $P_{r,k}$ were defined in the author's preprint \cite{ore1}:
\env{P_{r,k}(w;\bs{\omega})=\frac{(-1)^k}{k!}\log\Gamma_{r,k}(w;\bs{\omega})+H_ka_{r,k}(w;\bs{\omega})}
for $k\geq{0}$. Then hierarchy of $P_{r,k}$ is simpler than that of $\Gamma_{r,k}$:
\env{\frac{\partial}{\partial w}P_{r,k}(w;\bs{\omega})=-P_{r,k-1}(w;\bs{\omega}).}
This fact shows that $P_{r,k}$ can be defined for any $k\in\mathbb{Z}$. Moreover, we can investigate the asymptotic behavior of $P_{r,k}$ in the same way as we prove that of $\Gamma_r$:
\begin{Thm}
	As $w\to\infty$, we have asymptotically
	\env{P_{r+l,k}(w;(\bs{\omega},\bs{\alpha}))=\sum_{N=-l}^{r+k} a_{l,N}(a;\bs{\alpha})P_{r,k-N}(w;\bs{\omega})+\frac{a_{l,r+k+1}(a;\bs{\alpha})}{|\bs{\omega}|_{\times}}\frac{1}{w}+O\kak{\frac{1}{w^2}},}
	where $\bs{\alpha}=(\alpha_1,\cdots,\alpha_l)$ with $\mathrm{Re}(\alpha_j)>0$ $(j=1,\cdots,l)$.
\end{Thm}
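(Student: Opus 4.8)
The plan is to imitate Katayama--Ohtsuki's treatment of $\log\Gamma_r$: start from an integral representation of $P_{r,k}$ and peel off the part of the kernel coming from $\bs{\alpha}$. The representation I would use is
\[
P_{r,m}(w;\bs{\omega})=\int_{I(\lambda,\infty)} f_{\bs{\omega}}(t)\,e^{-wt}\,t^{-m-1}\left(\frac{1}{2\pi i}\log t+\frac{\gamma}{2\pi i}-\frac12\right)dt ,\qquad 0<\lambda<\min_{i}|2\pi/\omega_i|,
\]
valid for every $m\in\mathbb{Z}$. For $m=0$ this is exactly the Katayama--Ohtsuki formula, since $P_{r,0}=\log\Gamma_r$; for $m\ge 1$ it results from differentiating the Hankel representation of $\zeta_r(s,w;\bs{\omega})$ at $s=-m$ (cf. \cite{ore1}), the term $H_m a_{r,m}(w;\bs{\omega})$ in the definition of $P_{r,m}$ being precisely what absorbs the $\psi(m+1)$ coming from $\Gamma'(1-s)$; and since differentiating the right-hand side in $w$ turns $t^{-m-1}$ into $-t^{-(m-1)-1}$, both sides satisfy $\partial_w(\cdot)_m=-(\cdot)_{m-1}$, so the identity propagates to all $m\le 0$ as well. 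Write $g(t)=\frac{1}{2\pi i}\log t+\frac{\gamma}{2\pi i}-\frac12$.

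Now take $\lambda$ also below $\min_j|2\pi/\alpha_j|$, so that a single contour $I(\lambda,\infty)$ serves for $\bs{\omega}$, for $\bs{\alpha}$ and for $(\bs{\omega},\bs{\alpha})$. Applying the representation to $P_{r+l,k}(w+a;(\bs{\omega},\bs{\alpha}))$ and using $f_{(\bs{\omega},\bs{\alpha})}=f_{\bs{\omega}}f_{\bs{\alpha}}$ together with $e^{-(w+a)t}=e^{-at}e^{-wt}$, I would insert the expansion
\[
f_{\bs{\alpha}}(t)\,e^{-at}=\sum_{N=-l}^{r+k+1}a_{l,N}(a;\bs{\alpha})\,t^{N}+R(t),\qquad R(t)=O\!\left(t^{\,r+k+2}\right)\ \ (t\to0),
\]
whose coefficients are the multiple Bernoulli polynomials of $\bs{\alpha}$ by definition and whose remainder $R$ has $w$-independent Taylor coefficients. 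The sum being finite, it may be integrated term by term, and in the $N$-th term the extra factor $t^{N}$ meets $t^{-k-1}$ to produce $t^{-(k-N)-1}$, so by the representation above that term equals $a_{l,N}(a;\bs{\alpha})\,P_{r,k-N}(w;\bs{\omega})$. Hence
\[
P_{r+l,k}(w+a;(\bs{\omega},\bs{\alpha}))=\sum_{N=-l}^{r+k+1}a_{l,N}(a;\bs{\alpha})\,P_{r,k-N}(w;\bs{\omega})+\int_{I(\lambda,\infty)}R(t)\,f_{\bs{\omega}}(t)\,e^{-wt}\,t^{-k-1}g(t)\,dt .
\]

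It remains to handle the top summand and the remainder integral. For $N=r+k+1$ one has $k-N=-r-1$, so $P_{r,-r-1}(w;\bs{\omega})=\int_{I(\lambda,\infty)}f_{\bs{\omega}}(t)e^{-wt}t^{r}g(t)\,dt$; here $t^{r}f_{\bs{\omega}}(t)$ is holomorphic near $0$ with value $1/|\bs{\omega}|_{\times}$, and isolating this constant gives $\frac{1}{|\bs{\omega}|_{\times}}\int_{I(\lambda,\infty)}e^{-wt}g(t)\,dt=\frac{1}{|\bs{\omega}|_{\times}}\cdot\frac1w$ (the $\log t$ contributions on the two edges of the cut cancel, leaving $\frac{1}{2\pi i}\cdot 2\pi i\int_0^\infty e^{-wt}\,dt$), the remaining $O(t)$ part being $O(1/w^2)$; thus the top term is $\frac{a_{l,r+k+1}(a;\bs{\alpha})}{|\bs{\omega}|_{\times}}\frac1w+O(1/w^2)$. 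For the remainder, $R(t)f_{\bs{\omega}}(t)t^{-k-1}=O(t)$ near $0$ and the whole integrand decays along $I(\lambda,\infty)$ (with $\mathrm{Re}(a)>0$, $f_{\bs{\alpha}}(t)e^{-at}$ is bounded on the contour, and after subtracting the polynomial — which $e^{-wt}$ controls — so is $R$), so the Hankel-contour form of Watson's lemma gives $\int_{I(\lambda,\infty)}R(t)f_{\bs{\omega}}(t)e^{-wt}t^{-k-1}g(t)\,dt=O(1/w^2)$. Combining these yields the claimed expansion. The points requiring the most attention are setting up the integral representation correctly at all the integer values of $m$ that occur and controlling the remainder integral uniformly (in particular its large-$|t|$ tail); once these are in place, the algebraic heart — that multiplying the kernel by $t^{N}$ merely shifts $k\mapsto k-N$ — makes the rest immediate.
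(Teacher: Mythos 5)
Your proof is correct and follows essentially the same route as this paper's proof of the generalization (Theorem \ref{mt}): an integral representation over $I(\lambda,\infty)$ with a kernel independent of $k$, the multiple-Bernoulli expansion of $f_{\bs{\alpha}}(t)e^{-at}$, term-by-term identification of the factor $t^N$ with the shift $k\mapsto k-N$, and a Watson-type estimate for the tail. The only addition is that you carry the expansion one term further, to $N=r+k+1$, so as to extract the explicit coefficient $a_{l,r+k+1}(a;\bs{\alpha})/|\bs{\omega}|_{\times}$ of $1/w$ with error $O(1/w^2)$ --- a refinement the paper itself omits (this theorem is quoted from \cite{ore1}) and which its proof of Theorem \ref{mt} deliberately stops short of.
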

In this paper, we introduce a new generalization of the multiple gamma functions and call it the hypermultiple gamma functions of BM-type:
\env{{}_m\Gamma_{r,k}(w;\bs{\omega})=\exp\kak{\left.\frac{\partial^m}{\partial s^m}\zeta_r(s,w;\bs{\omega})\right|_{s=-k}}}
This generalization includes both Kurokawa-Ochiai's $\Gamma_{r,k}$ and Katayama's hypermultiple gamma functions
\env{{}_m\Gamma_r(w;\bs{\omega})=\exp\kak{\left.\frac{\partial^m}{\partial s^m}\zeta_r(s,w;\bs{\omega})\right|_{s=0}}}
introduced in Katayama \cite{higher}. We remark that $\log{}_m\Gamma_r$ means the natural logarithm of ${}_m\Gamma_r$, not $\log\Gamma_r/\log m$. The name "hypermultiple" is derived from Katayama's report \cite{tsudajuku}.\\

Our second purpose is to construct the "balanced" hypermultiple gamma functions of BM-type ${}_mP_{r,k}$ and to show the asymptotic behavior of these functions. We shall give the definition of ${}_mP_{r,k}$ in the section 2. Our main theorem is following:
\begin{Thm}\label{mt}
	As $w\to\infty$, we have asymptotically
	\env{{}_mP_{r+l,k}(w;(\bs{\omega},\bs{\alpha}))=\sum_{N=-l}^{r+k} a_{l,N}(a;\bs{\alpha}){}_mP_{r,k-N}(w;\bs{\omega})+O\kak{\frac{(\log w)^{m-1}}{w}}.}
\end{Thm}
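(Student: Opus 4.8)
The plan is to imitate, in the hypermultiple setting, the Katayama--Ohtsuki integral-representation proof of Barnes' asymptotic formula, following the template used for $P_{r,k}$ in \cite{ore1}. Everything rests on the Mellin--Barnes (Hankel-type) representation
\[
\zeta_r(s,w;\bs{\omega})=\frac{\Gamma(1-s)}{2\pi i}\int_{I(\lambda,\infty)}(-t)^{s-1}e^{-wt}f_{\bs{\omega}}(t)\,dt\qquad(s\notin\{1,\dots,r\}),
\]
with $f_{\bs{\omega}}(t)=\prod_{i=1}^r(1-e^{-\omega_it})^{-1}$ and $\lambda$ as in the introduction. Differentiating $m$ times in $s$ and setting $s=-k$ --- writing $(-t)^{s-1}=(-t)^{-k-1}e^{(s+k)\log(-t)}$ and Taylor-expanding $\Gamma(1-s)$ at $s=-k$ --- represents $\log{}_m\Gamma_{r,k}(w;\bs{\omega})$ as a contour integral of $(-t)^{-k-1}e^{-wt}f_{\bs{\omega}}(t)$ against a degree-$m$ polynomial in $\log(-t)$; together with the definition of ${}_mP_{r,k}$ from Section~2 this gives an integral representation of ${}_mP_{r,k}$ of the same shape (the hypermultiple analogue of the representation quoted in the introduction). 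For the proof I will in fact only use the identity obtained from it by expanding $f_{(\bs{\omega},\bs{\alpha})}=f_{\bs{\omega}}f_{\bs{\alpha}}$, replacing $w$ by $w+a$, and writing $f_{\bs{\alpha}}(t)e^{-at}=\sum_{N=-l}^{r+k}a_{l,N}(a;\bs{\alpha})t^{N}+R(t)$ with $R(t)=O(t^{r+k+1})$ as $t\to0$, namely
\[
\zeta_{r+l}(s,w+a;(\bs{\omega},\bs{\alpha}))=\sum_{N=-l}^{r+k}(-1)^{N}a_{l,N}(a;\bs{\alpha})\,\frac{\Gamma(1-s)}{\Gamma(1-s-N)}\,\zeta_r(s+N,w;\bs{\omega})+\mathcal{E}(s,w),
\]
where $\mathcal{E}(s,w)=\frac{\Gamma(1-s)}{2\pi i}\int_{I(\lambda,\infty)}(-t)^{s-1}R(t)e^{-wt}f_{\bs{\omega}}(t)\,dt$. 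The truncation at $N=r+k$ is the minimal one for which $(-t)^{s-1}R(t)f_{\bs{\omega}}(t)$ is locally integrable at $t=0$ when $s=-k$, which is precisely why $r+k$ is the upper limit of the sum in the statement.

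Now apply $\partial_s^m|_{s=-k}$. The left-hand side is $\log{}_m\Gamma_{r+l,k}(w+a;(\bs{\omega},\bs{\alpha}))$. On the right, each summand is $\phi_N(s)\,\zeta_r(s+N,w;\bs{\omega})$ with $\phi_N(s)=\Gamma(1-s)/\Gamma(1-s-N)$: for $N\le k$ both factors are holomorphic at $s=-k$, and for $k+1\le N\le r+k$ the simple zero of $\phi_N$ at $-k$ absorbs the (at most simple) pole of $\zeta_r(\,\cdot+N,w;\bs{\omega})$ there, so the Leibniz rule applies throughout and yields a linear combination of the functions $\log{}_{m-i}\Gamma_{r,k-N}(w;\bs{\omega})$ ($0\le i\le m$) with coefficients assembled from $\phi_N^{(i)}(-k)$; in the range $k+1\le N\le r+k$ the entries with $k-N<0$ are read off through the regularized definition of ${}_mP$ for arbitrary second index (just as $P_{r,k}$ was extended to all $k\in\mathbb{Z}$). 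I then invoke the definition of ${}_mP_{r,k}$ from Section~2: together with the Cauchy-product identity $a_{r+l,M}(w+a;(\bs{\omega},\bs{\alpha}))=\sum_{N=-l}^{M+r}a_{l,N}(a;\bs{\alpha})a_{r,M-N}(w;\bs{\omega})$ and the evaluation of $\zeta_r$ at non-positive integers (an explicit sign and factorial times $a_{r,n}(w;\bs{\omega})$), the normalization defining ${}_mP$ is exactly the one that makes this combination collapse to $\sum_{N=-l}^{r+k}a_{l,N}(a;\bs{\alpha})\,{}_mP_{r,k-N}(w;\bs{\omega})$. Concretely one matches, for each $N$ and each $i$, the coefficient of $\log{}_{m-i}\Gamma_{r,k-N}(w;\bs{\omega})$ on the two sides, redistributing the lower-hypermultiplicity contributions among the multiple-Bernoulli corrections --- the bookkeeping carried out for $m=1$ in \cite{ore1}, here organized level by level (an induction on $m$). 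This reorganization is an exact identity; no asymptotics enter.

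It remains to estimate $\partial_s^m\mathcal{E}(s,w)|_{s=-k}$. Set $g_0(t)=t^{-k-1}R(t)f_{\bs{\omega}}(t)$, which is holomorphic near $t=0$ with $g_0(0)=a_{l,r+k+1}(a;\bs{\alpha})/|\bs{\omega}|_{\times}$ and of polynomial growth at $+\infty$. Leibniz expresses $\partial_s^m\mathcal{E}(s,w)|_{s=-k}$ as a finite combination of integrals $\int_{I(\lambda,\infty)}(\log(-t))^{j}g_0(t)e^{-wt}\,dt$ with $0\le j\le m$. Since $g_0$ is holomorphic at $0$ one lets $\lambda\downarrow0$; the $j=0$ term then vanishes (no monodromy, the two rays cancel), while for $j\ge1$ the jump of $(\log(-t))^{j}$ across $(0,\infty)$ is a polynomial in $\log t$ of degree $j-1$, so that term equals $\tfrac{1}{2\pi i}\int_0^\infty(\text{a polynomial of degree}\le j-1\text{ in }\log t)\,g_0(t)e^{-wt}\,dt=O\big((\log w)^{j-1}/w\big)$ by Watson's lemma with a logarithmic weight. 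Summing over $j\le m$ gives $O\big((\log w)^{m-1}/w\big)$, the asserted error. The drop from the naive $(\log w)^{m}/w$ to $(\log w)^{m-1}/w$ is exactly the degree loss under the monodromy of $(\log(-t))^{m}$, equivalently the regularity of $\zeta_r(s,w;\bs{\omega})$ at $s=r+1$.

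The main obstacle is the middle step: checking that the normalization in the definition of ${}_mP$ is compatible, coefficient by coefficient, with the Leibniz expansion of the Gamma-ratios $\phi_N$, and in particular treating the range $k+1\le N\le r+k$, where $\zeta_r(\,\cdot+N,w;\bs{\omega})$ genuinely has a pole at $s=-k$, so the corresponding ${}_mP_{r,k-N}(w;\bs{\omega})$ on the right must be understood through its regularized definition and one must track how the $m$ Leibniz terms redistribute among the Bernoulli-polynomial corrections. If Section~2 defines ${}_mP$ with this recursion already built in, that step becomes automatic and the real labour is the error estimate above --- in particular pinning down the Watson's-lemma constants and confirming the precise power of $\log w$.
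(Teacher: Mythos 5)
Your overall architecture is the same as the paper's: represent ${}_mP_{r,k}$ as a contour integral of $f_{\bs{\omega}}(t)e^{-wt}t^{-k-1}$ against a degree-$m$ polynomial in the logarithm, split $f_{\bs{\alpha}}(t)e^{-at}$ into its Laurent truncation up to $t^{r+k}$ plus a remainder $R(t)=O(t^{r+k+1})$, and estimate the remainder integral by noting that the monodromy of $(\log t)^{\nu}$ across the cut drops the logarithmic degree by one before Watson's lemma is applied. Your error analysis is in substance identical to the paper's (the identity you use for the jump of $(\log(-t))^{j}$ is exactly the binomial identity the paper invokes, with the $D=\nu$ term absent), and it correctly yields $O((\log w)^{m-1}/w)$.

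The gap is the step you yourself flag as ``the main obstacle'': identifying the truncated part with $\sum_{N=-l}^{r+k}a_{l,N}(a;\bs{\alpha})\,{}_mP_{r,k-N}(w;\bs{\omega})$. You propose to do this by Leibniz-expanding $\phi_N(s)=\Gamma(1-s)/\Gamma(1-s-N)$ at $s=-k$ and matching coefficients of $\log{}_{m-i}\Gamma_{r,k-N}$ against the normalization in the definition of ${}_mP$, but you never verify that the coefficients $c^m_{\mu,k}$ actually make this collapse happen --- you only assert that the definition ``should be exactly the one that makes this work.'' That is the one piece of genuine content in the theorem, and the paper closes it not by bookkeeping but by Proposition \ref{unko}: writing ${}_mP_{r,k}$ as $\int f_{\bs{\omega}}(t)e^{-wt}t^{-k-1}S_{m,k}(\log t)\,dt$ with $S_{m,k}(x)=\sum_{\mu}c^m_{m-\mu,k}\,{}_\mu Q_k(x)$, one shows via the generating function $F_k(s)=\sum_\mu H_k(\mu)s^\mu=k!/(1-s)_k$ that
\begin{equation*}
\frac{(-1)^k}{k!}F_k(s)\frac{e^{sx}}{\Gamma(s-k)(e^{2\pi is}-1)}=\frac{e^{sx}}{\Gamma(s)(e^{2\pi is}-1)},
\end{equation*}
i.e.\ $S_{m,k}(x)={}_mQ_0(x)$ is independent of $k$. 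With that, the term $a_{l,N}t^{N-k-1}S_{m,k}(\log t)=a_{l,N}t^{-(k-N)-1}S_{m,k-N}(\log t)$ is \emph{literally} the integrand defining ${}_mP_{r,k-N}$, and no Leibniz expansion of Gamma-ratios, no regularization discussion for $k-N<0$, and no induction on $m$ is needed. You should either prove the $k$-independence of $S_{m,k}$ (equivalently, carry out your coefficient matching, which amounts to the same Pochhammer identity) or your argument remains a plan rather than a proof at its central step.
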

\section{Construction of ${}_mP_{r,k}$}
In this section, we construct the balanced hypermultiple gamma functions of BM-type ${}_mP_{r,k}$.
For $k\in\mathbb{Z}_{\geq{0}}$, we define
\env{\frac{e^{(s+k)x}}{\Gamma(s)(e^{2\pi is}-1)}=\sum_{m=0}^{\infty} \frac{{}_mQ_k(x)}{m!}(s+k)^m.}
Then ${}_mQ_k(x)$ is a polynomial of $x$ whose degree is $m$. From a well-known representation
\env{\zeta_r(s,w;\bs{\omega})=\frac{1}{\Gamma(s)(e^{2\pi is}-1)}\int_{I(\lambda,\infty)} f_{\bs{\omega}}(t)e^{-wt}t^{s-1}\,dt,}
we obtain a representation of the hypermultiple gamma functions of BM-type
\env{\log{}_m\Gamma_{r,k}(w;\bs{\omega})=\int_{I(\lambda,\infty)} f_{\bs{\omega}}(t)e^{-wt}t^{-k-1}{}_mQ_k(\log t)\,dt.}
For $\mu\in\mathbb{Z}_{\geq{0}}$, we define the multiple harmonic sum
\env{H_k(\mu)=\sum_{0<j_1\leq\cdots\leq j_{\mu}\leq k} j_1^{-1}\cdots j_{\mu}^{-1}}
and put
\env{c^m_{\mu,k}=\frac{(-1)^k}{k!}\frac{m!}{(m-\mu)!}H_k(\mu).}
We consider $H_0(\mu)$ as $0$ and $H_k(0)$ as $1$. Then we can define the balanced hypermultiple gamma functions of BM-type
\env{{}_mP_{r,k}(w;\bs{\omega})=\sum_{\mu=0}^m c^m_{m-\mu,k}\log{}_m\Gamma_{r,k}(w;\bs{\omega}).}
\begin{Prop}
	The functions ${}_mP_{r,k}$ satisfy simplified Kinkelin's formula. In other words, we have
	\env{\frac{\partial}{\partial w}{}_mP_{r,k}(w;\bs{\omega})=-{}_mP_{r,k-1}(w;\bs{\omega}).}
\end{Prop}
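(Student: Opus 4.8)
The plan is to carry everything through the integral representation of $\log{}_m\Gamma_{r,k}$ recorded just above the Proposition. Substituting that representation (with dummy index $\mu$ in place of $m$) into the definition of ${}_mP_{r,k}$ gives
\[
{}_mP_{r,k}(w;\bs{\omega})=\int_{I(\lambda,\infty)}f_{\bs{\omega}}(t)\,e^{-wt}\,t^{-k-1}\Bigl(\sum_{\mu=0}^{m}c^{m}_{m-\mu,k}\,{}_\mu Q_k(\log t)\Bigr)\,dt .
\]
I would then differentiate under the integral sign in $w$: this is legitimate because $e^{-wt}$ decays exponentially along the two rays of $I(\lambda,\infty)$ while the remaining factor has only a fixed, integrable singularity of the shape $t^{-r-k-1}(\log t)^{m}$ near $t=0$, so the integral and its $w$-derivative converge locally uniformly in $\mathrm{Re}(w)>0$. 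Since $\partial_w\bigl(e^{-wt}t^{-k-1}\bigr)=-e^{-wt}t^{-(k-1)-1}$, the whole Proposition collapses to the purely algebraic claim that the polynomial $\sum_{\mu=0}^{m}c^{m}_{m-\mu,k}\,{}_\mu Q_k(x)$ in $x=\log t$ does not depend on $k$; comparing the integrands for $k$ and $k-1$ then finishes it.

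The core step is this $k$-independence, which I would prove with generating functions. Writing $\epsilon=s+k$ in the defining relation for ${}_\mu Q_k$ and using $e^{2\pi ik}=1$ gives $\sum_{\mu\ge0}\frac{{}_\mu Q_k(x)}{\mu!}\epsilon^{\mu}=\dfrac{e^{\epsilon x}}{\Gamma(\epsilon-k)(e^{2\pi i\epsilon}-1)}$. Next, $H_k(\mu)$ is exactly the complete homogeneous symmetric function $h_{\mu}\!\left(1,\tfrac12,\dots,\tfrac1k\right)$, so $\sum_{\mu\ge0}H_k(\mu)\epsilon^{\mu}=\prod_{j=1}^{k}(1-\epsilon/j)^{-1}=k!/\prod_{j=1}^{k}(j-\epsilon)$. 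Finally the functional equation $\Gamma(\epsilon)=\Gamma(\epsilon-k)\prod_{j=1}^{k}(\epsilon-j)$ yields $\Gamma(\epsilon-k)^{-1}=(-1)^{k}\Gamma(\epsilon)^{-1}\prod_{j=1}^{k}(j-\epsilon)$. By the convolution rule for power series, $\sum_{\mu=0}^{m}c^{m}_{m-\mu,k}\,{}_\mu Q_k(x)$ equals $\dfrac{(-1)^{k}m!}{k!}$ times the coefficient of $\epsilon^{m}$ in the product of the first two series above; in that product the factor $\prod_{j=1}^{k}(j-\epsilon)$ from $\sum_{\mu}H_k(\mu)\epsilon^{\mu}$ cancels the like factor produced by $\Gamma(\epsilon-k)^{-1}$, so the product reduces to $(-1)^{k}k!\cdot e^{\epsilon x}/\bigl(\Gamma(\epsilon)(e^{2\pi i\epsilon}-1)\bigr)$, and together with the prefactor one is left with
\[
\sum_{\mu=0}^{m}c^{m}_{m-\mu,k}\,{}_\mu Q_k(x)=m!\,[\epsilon^{m}]\,\frac{e^{\epsilon x}}{\Gamma(\epsilon)(e^{2\pi i\epsilon}-1)} ,
\]
where $[\epsilon^{m}]$ denotes the coefficient of $\epsilon^{m}$; this plainly contains no $k$, and is a genuine polynomial of degree $m$ in $x$ because $\bigl(\Gamma(\epsilon)(e^{2\pi i\epsilon}-1)\bigr)^{-1}$ is holomorphic and nonzero at $\epsilon=0$.

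To conclude I would name the common polynomial $S^{m}(t):=m!\,[\epsilon^{m}]\dfrac{t^{\epsilon}}{\Gamma(\epsilon)(e^{2\pi i\epsilon}-1)}$, so that, with the same $S^{m}$ and the same admissible $\lambda$ (which depends only on $\bs{\omega}$) for every $k$,
\[
{}_mP_{r,k}(w;\bs{\omega})=\int_{I(\lambda,\infty)}f_{\bs{\omega}}(t)\,e^{-wt}\,t^{-k-1}S^{m}(t)\,dt .
\]
Differentiating in $w$ replaces $t^{-k-1}$ by $-t^{-(k-1)-1}$, and the resulting integral is exactly $-{}_mP_{r,k-1}(w;\bs{\omega})$ (and, just as for $P_{r,k}$, this integral makes sense for every $k\in\mathbb{Z}$, so the recursion may be taken as the definition of ${}_mP_{r,k}$ for $k<0$). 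The one genuine obstacle is the $k$-independence of $\sum_{\mu}c^{m}_{m-\mu,k}\,{}_\mu Q_k$; once the identity $\sum_{\mu}H_k(\mu)\epsilon^{\mu}=k!/\prod_{j=1}^{k}(j-\epsilon)$ is recognized and matched against the Gamma functional equation, the cancellation is immediate, and the rest — convergence of the contour integral, differentiation under it, and the index bookkeeping — is routine.
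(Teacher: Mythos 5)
Your proof is correct, but it takes a genuinely different route from the paper's. The paper proves this Proposition purely algebraically: from $\frac{\partial}{\partial w}\zeta_r(s,w)=-s\zeta_r(s+1,w)$ it derives the recursion $\frac{\partial}{\partial w}\log{}_m\Gamma_{r,k}=k\log{}_m\Gamma_{r,k-1}-m\log{}_{m-1}\Gamma_{r,k-1}$, expands both $\frac{\partial}{\partial w}{}_mP_{r,k}$ and $-{}_mP_{r,k-1}$ as linear combinations of the $\log{}_{\mu}\Gamma_{r,k-1}$, and matches coefficients, reducing everything to the identities $kc^m_{\mu,k}-(m-\mu+1)c^m_{\mu-1,k}+c^m_{\mu,k-1}=0$, which follow from the three-term relation $kH_k(\mu)-H_k(\mu-1)-kH_{k-1}(\mu)=0$. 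You instead push everything through the contour-integral representation and rest the entire argument on the $k$-independence of $\sum_{\mu=0}^{m}c^m_{m-\mu,k}\,{}_\mu Q_k(x)$. That $k$-independence is precisely the paper's Proposition 3.1 (equivalently, the Corollary $S_{m,k}(x)={}_mQ_0(x)$), which the paper establishes later, for use in the main theorem, by essentially the same generating-function computation you give: $\sum_{\mu}H_k(\mu)\epsilon^{\mu}=k!/(1-\epsilon)_k$ combined with the functional equation of $\Gamma$. So your argument is sound and arguably more economical --- one computation serves both the simplified Kinkelin formula and the asymptotic expansion --- at the modest price of justifying differentiation under the contour integral (routine: the loop contour $I(\lambda,\infty)$ stays at distance $\lambda$ from the origin, so your aside about an integrable singularity at $t=0$ is moot, and the exponential decay of $e^{-wt}$ on the two rays does all the work). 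The paper's route for this particular Proposition is more elementary, requiring only the combinatorial recursion for the multiple harmonic sums and no analysis, but it does not by itself yield the $k$-independence of $S_{m,k}$, which must then be proved separately anyway.
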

\begin{proof}
	From (\ref{zetadef}), we have
	\env{\frac{\partial}{\partial w}\zeta_r(s,w;\bs{\omega})=-s\zeta_r(s+1,w;\bs{\omega}).}
	Thus we get
	\env{\frac{\partial}{\partial w}\log{}_m\Gamma_{r,k}(w;\bs{\omega})=k\log{}_m\Gamma_{r,k-1}(w;\bs{\omega})-m\log{}_{m-1}\Gamma_{r,k-1}(w;\bs{\omega})}
	for $m\geq{1}$ and
	\env{\frac{\partial}{\partial w}\log{}_0\Gamma_{r,k}(w;\bs{\omega})=k\log{}_0\Gamma_{r,k}(w;\bs{\omega})}
	for $m=0$. Then we obtain
	\env{\frac{\partial}{\partial w}{}_mP_{r,k}(w;\bs{\omega})=\sum_{\mu=0}^{m-1} (kc^m_{m-\mu,k}-(\mu+1)c^m_{m-\mu-1,k})\log{}_m\Gamma_{r,k-1}(w;\bs{\omega})+kc^m_{0,k}\log{}_m\Gamma_{r,k-1}(w;\bs{\omega})}
	and
	\env{{}_mP_{r,k-1}(w;\bs{\omega})=\sum_{\mu=0}^{m-1} c^m_{m-\mu,k-1}\log{}_{\mu}\Gamma_{r,k-1}(w;\bs{\omega})+c^m_{0,k-1}\log{}_m\Gamma_{r,k-1}(w;\bs{\omega}).}
	Therefore we only have to show
	\env{kc^m_{\mu,k}-(m-\mu+1)c^m_{\mu-1,k}+c^m_{\mu,k-1}=0}
	for $\mu\geq{1}$ and
	\env{kc^m_{0,k}+c^m_{0,k-1}=0.}
	These identity are easliy derived from a relation
	\begin{eqnarray}\label{relofhkm}\displaystyle kH_k(\mu)-H_k(\mu-1)-kH_{k-1}(\mu)=0.\end{eqnarray} 
\end{proof}
	If we put
\env{S_{m,k}(x)=\sum_{\mu=0}^m {}_mQ_k(x)c^m_{m-\mu,k},}
we can obviously write the definition of ${}_mP_{r,k}$ as
\env{\log{}_mP_{r,k}(w;\bs{\omega})=\int_{I(\lambda,\infty)} f_{\bs{\omega}}(t)e^{-wt}t^{-k-1}S_{m,k}(\log t)\,dt.}
\section{Proof of the main theorem}
In this section, we prove some important fact about $S_{m,k}(x)$ and our main theorem. The following proposition plays an important role in proof of the main theorem. 
\begin{Prop}\label{unko}
	The polynomial $S_{m,k}(x)$ is independent on $k$.
\end{Prop}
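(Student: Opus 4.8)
The plan is to work with the generating function of the family $\{S_{m,k}\}_{m\ge 0}$ in an auxiliary variable $z$ recording the order of the $s$-derivative at $s=-k$, and to show that this generating function does not involve $k$ at all. Put $z=s+k$, so that the defining relation of ${}_\mu Q_k$ becomes
\[
\sum_{\mu\ge 0}\frac{{}_\mu Q_k(x)}{\mu!}\,z^{\mu}=\frac{e^{zx}}{\Gamma(z-k)\,(e^{2\pi iz}-1)},
\]
an identity of functions holomorphic near $z=0$ (the simple zero of $1/\Gamma(z-k)$ cancels the simple zero of $e^{2\pi iz}-1$). Since $c^{m}_{m-\mu,k}=\frac{(-1)^{k}}{k!}\frac{m!}{\mu!}H_k(m-\mu)$, the definition $S_{m,k}(x)=\sum_{\mu=0}^{m}{}_\mu Q_k(x)\,c^{m}_{m-\mu,k}$ exhibits $S_{m,k}/m!$ as a Cauchy product, so
\[
\sum_{m\ge 0}\frac{S_{m,k}(x)}{m!}\,z^{m}
=\frac{(-1)^{k}}{k!}\left(\sum_{\mu\ge 0}\frac{{}_\mu Q_k(x)}{\mu!}z^{\mu}\right)\left(\sum_{\nu\ge 0}H_k(\nu)\,z^{\nu}\right).
\]

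The heart of the argument is the closed form of the last factor. Since $H_k(\nu)$ is the complete homogeneous symmetric polynomial $h_{\nu}(1,\frac12,\dots,\frac1k)$, one has
\[
\sum_{\nu\ge 0}H_k(\nu)\,z^{\nu}=\prod_{j=1}^{k}\frac{1}{1-z/j}=\frac{(-1)^{k}k!}{(z-1)(z-2)\cdots(z-k)};
\]
alternatively this follows by an easy induction on $k$, the inductive step being precisely the recursion (\ref{relofhkm}). On the other hand, iterating $\Gamma(z)=(z-1)\Gamma(z-1)$ gives $\Gamma(z)=(z-1)(z-2)\cdots(z-k)\,\Gamma(z-k)$, valid near $z=0$ where both sides have a simple pole, and since $e^{2\pi iz}=e^{2\pi i(z-k)}$ we obtain
\[
\sum_{\mu\ge 0}\frac{{}_\mu Q_k(x)}{\mu!}\,z^{\mu}=\frac{e^{zx}\,(z-1)(z-2)\cdots(z-k)}{\Gamma(z)\,(e^{2\pi iz}-1)}.
\]
Multiplying the two displayed closed forms, the polynomial $(z-1)(z-2)\cdots(z-k)$ and the constant $(-1)^{k}k!$ cancel against the prefactor $\frac{(-1)^{k}}{k!}$, leaving
\[
\sum_{m\ge 0}\frac{S_{m,k}(x)}{m!}\,z^{m}=\frac{e^{zx}}{\Gamma(z)\,(e^{2\pi iz}-1)},
\]
whose right-hand side is free of $k$. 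Comparing coefficients of $z^{m}$ shows $S_{m,k}(x)=S_{m,0}(x)={}_m Q_0(x)$ for every $k\ge 0$, which is the claim (and in passing reconfirms that $S_{m,k}$ has degree $m$ in $x$).

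I do not expect a serious obstacle; the proof is a short generating-function computation once one spots that the shape of $c^{m}_{m-\mu,k}$ turns $S_{m,k}$ into a convolution and that the $H_k(\nu)$-series telescopes against $1/\Gamma(z-k)$. The one point needing a little care is the status of the manipulations: each displayed equality should be read as an identity of Taylor series at $z=0$, and one should record that every function written down — $e^{zx}/(\Gamma(z-k)(e^{2\pi iz}-1))$, the finite product $\prod_{j=1}^k(1-z/j)^{-1}$, and $e^{zx}/(\Gamma(z)(e^{2\pi iz}-1))$ — is genuinely holomorphic at the origin, the simple zeros and poles of the $\Gamma$-factors cancelling those of $e^{2\pi iz}-1$. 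Granting this, the coefficient comparison is immediate.
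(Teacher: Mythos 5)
Your proof is correct and follows essentially the same route as the paper: you form the generating function $\sum_m S_{m,k}(x)z^m/m!$ as a Cauchy product, identify $\sum_\nu H_k(\nu)z^\nu$ in closed form (the paper derives $F_k(s)=k!/(1-s)_k$ from the recursion (\ref{relofhkm}); your symmetric-function product is the same thing), and cancel against $\Gamma(z-k)$ via the functional equation to land on $e^{zx}/(\Gamma(z)(e^{2\pi iz}-1))$, which is the paper's identity (\ref{0q0}). Your added care about holomorphy at $z=0$ and the explicit remark that $S_{m,k}={}_mQ_0$ (the paper's subsequent Corollary) are welcome but do not change the argument.
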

\begin{proof}
	Let $F_k(s)$ be the generating function of $H_k(\mu)$ for fixed $k$:
	\env{F_k(s)=\sum_{\mu=0}^{\infty} H_k(\mu)s^{\mu}.}
	From an easy identity
	\begin{eqnarray}\label{id}\displaystyle \sum_{m=0}^{\infty} \frac{S_{m,k}(x)}{m!}s^m=\frac{(-1)^k}{k!}F_k(s)\frac{e^{sx}}{\Gamma(s-k)(e^{2\pi is}-1)},\end{eqnarray}
	we only have to show that the right side of (\ref{id}) is independent on $k$. Since (\ref{relofhkm}), it follows that
	\env{F_k(s)-F_{k-1}(s)=\frac{s}{k}F_k(s).}
	This relation and an identity $F_0(s)=1$ shows that
	\env{F_k(s)=\frac{k!}{(1-s)_k},}
	where $(a)_k=a(a+1)\cdots(a+k-1)$ is the Pochhammer symbol. Hence we have
	\begin{eqnarray}\label{0q0}\displaystyle \frac{(-1)^k}{k!}F_k(s)\frac{e^{sx}}{\Gamma(s-k)(e^{2\pi is}-1)}&=&\frac{e^{sx}}{\Gamma(s)(e^{2\pi is}-1)}.\end{eqnarray}
	This is clearly independent on $k$.
\end{proof}
\begin{Cor}
	For $m\geq{0}$, we have $S_{m,k}(x)={}_mQ_0(x)$.
\end{Cor}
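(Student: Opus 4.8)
The plan is to extract the corollary directly from the generating-function computation carried out in the proof of Proposition~\ref{unko}. Chaining the identity (\ref{id}) with the simplification (\ref{0q0}) gives
\env{\sum_{m=0}^{\infty}\frac{S_{m,k}(x)}{m!}s^m=\frac{e^{sx}}{\Gamma(s)(e^{2\pi is}-1)}.}
The point is that this right-hand side is exactly the generating function defining ${}_mQ_0(x)$: putting $k=0$ in
\env{\frac{e^{(s+k)x}}{\Gamma(s)(e^{2\pi is}-1)}=\sum_{m=0}^{\infty}\frac{{}_mQ_k(x)}{m!}(s+k)^m}
yields $\dfrac{e^{sx}}{\Gamma(s)(e^{2\pi is}-1)}=\sum_{m=0}^{\infty}\dfrac{{}_mQ_0(x)}{m!}s^m$. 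So I would simply compare the coefficients of $s^m$ in the two power-series expansions of $\dfrac{e^{sx}}{\Gamma(s)(e^{2\pi is}-1)}$, which gives $S_{m,k}(x)={}_mQ_0(x)$ for all $m\geq 0$ and all $k$.

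An equivalent, more hands-on route avoids generating functions: by Proposition~\ref{unko} it is enough to evaluate $S_{m,k}(x)$ at $k=0$. For $k=0$ the coefficient $c^m_{m-\mu,0}=\dfrac{m!}{\mu!}H_0(m-\mu)$ vanishes whenever $0\leq\mu<m$, since $H_0(j)=0$ for $j\geq 1$, while for $\mu=m$ we get $c^m_{0,0}=H_0(0)=1$ by the convention $H_k(0)=1$. Hence only the $\mu=m$ term of $S_{m,0}(x)=\sum_{\mu=0}^m {}_\mu Q_0(x)\,c^m_{m-\mu,0}$ survives, and it equals ${}_mQ_0(x)$; invoking Proposition~\ref{unko} once more removes the restriction to $k=0$.

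There is essentially no obstacle here: the statement is a formal consequence of Proposition~\ref{unko} together with the $k=0$ instance of the definition of ${}_mQ_k$. The only place to be careful is bookkeeping --- the index shift $m-\mu$ in $c^m_{m-\mu,k}$ and the two normalization conventions $H_0(\mu)=0$ for $\mu\geq 1$ and $H_k(0)=1$ --- which is why I would favour the generating-function argument of the first paragraph, where these conventions have already been absorbed into the derivation of (\ref{0q0}).
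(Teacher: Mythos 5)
Your first (generating-function) argument is exactly the paper's proof: the paper likewise notes that the right side of (\ref{0q0}) equals $\sum_{m\geq 0}\frac{{}_mQ_0(x)}{m!}s^m$ by the $k=0$ case of the definition of ${}_mQ_k$, and compares coefficients with (\ref{id}). Your second, more hands-on route via $c^m_{m-\mu,0}$ is a correct bonus, but the argument you say you favour is the one the paper uses.
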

\begin{proof}
	The right side of (\ref{0q0}) is equal to
	\env{\sum_{m=0}^{\infty} \frac{{}_mQ_0(x)}{m!}s^m.}
\end{proof}
\begin{proof}[Proof of Theorem \ref{mt}]
	From Theorem \ref{unko}, we obtain
	\begin{eqnarray}\label{ponzu}\displaystyle {}_mP_{r+l,k}(w+a;(\bs{\omega},\bs{\alpha}))&=&\int_{I(\lambda,\infty)} f_{\bs{\omega}}(t)e^{-wt}\kak{\sum_{N=r+k+1}^{\infty} a_{l,N}(a;\bs{\alpha})t^N}t^{-k-1}S_{m,k}(\log t)\,dt\nonumber\\&{}&+\sum_{N=-l}^{r+k}a_{l,N}(a;\bs{\alpha}){}_mP_{r,k-N}(w;\bs{\omega}).\end{eqnarray}
	From an identity
	\env{&&\int_{I(\lambda,\infty)}f_{\bs{\omega}}(t)e^{-wt}\kak{\sum_{N=r+k+1}^{\infty} a_{l,N}(a;\bs{\alpha})t^N}t^{-k-1}(\log t)^{\nu}\,dt\\&=&\sum_{D=0}^{\nu-1} \binom{\nu}{D}(2\pi i)^{\nu-D}\int_0^{\infty}f_{\bs{\omega}}(t)e^{-wt}\kak{\sum_{N=r+k+1}^{\infty} a_{l,N}(a;\bs{\alpha})t^N}t^{-k-1}(\log t)^D\,dt,}
	the first term of the right side of (\ref{ponzu}) is $O((\log w)^{m-1}/w)$.
\end{proof}

\end{document}